\numberwithin{equation}{section}
 \DeclareMathOperator{\erf}{erf}
\newtheorem{theorem}{Theorem}[section]
\newtheorem{lemma}[theorem]{Lemma}
\begin{document}
\author{Alexander E. Patkowski}
\title{On asymptotic expansions for basic hypergeometric functions }

\maketitle
\begin{abstract} This paper establishes new results concerning asymptotic expansions of $q$-series related to partial theta functions. We first establish a new method to obtain asymptotic expansions using a result of Ono and Lovejoy, and then build on these observations to obtain asymptotic expansions for related multi hypergeometric series. \end{abstract}

\keywords{\it Keywords: \rm $L$-function; $q$-series; asymptotics}

\subjclass{ \it 2010 Mathematics Subject Classification Primary 33D90; Secondary 11M41}
\section*{Acknowledgement: Dedicated to Julia, Carolyn, and Piotr.}

\section{Introduction and Main Results}

\par In keeping with usual notation [6], put $(y)_n=(y;q)_{n}:=\prod_{0\le k\le n-1}(1-yq^{k}),$ and define $(y)_{\infty}=(y;q)_{\infty}:=\lim_{n\rightarrow\infty}(y;q)_{n}.$ Recall (see [11, pg.182]) the parabolic cylinder function $D_s(x),$ defined as the solution to the differential equation of Weber [7, pg.1031, eq.(9.255), \#1]
$$\frac{\partial^2 f}{\partial x^2}+(s+\frac{1}{2}-\frac{x^2}{4})f=0.$$ This function also has a relationship with the confluent hypergeometric function ${}_1F_1(a;b; x)$ [11, pg.183], [7, pg.1028]
$$D_s(x)=e^{-x^2/4}\sqrt{\pi}\left(\frac{2^{s/2}}{\Gamma(\frac{1}{2}-\frac{s}{2})}{}_1F_1(-\frac{s}{2};\frac{1}{2};\frac{x^2}{2})-\frac{x2^{s/2+1/2}}{\Gamma(-\frac{s}{2})}{}_1F_1(\frac{1}{2}-\frac{s}{2};\frac{3}{2};\frac{x^2}{2})\right).$$
In the last two decades considerable attention has been given to asymptotic expansions of basic hypergeometric series when $q=e^{-t}$ as $t\rightarrow0^{+}.$ Special examples related to negative values of $L$-functions have been offered in [3, 8, 9, 12, 13, 14]. Recall that an $L$-function is defined as the series $L(s)=\sum_{n\ge1}a_nn^{-s},$ for a suitable arithmetic function $a_n:\mathbb{N}\rightarrow\mathbb{C}.$ In the simple case $a_n=1,$ we have the Riemann zeta function $L(s)=\zeta(s),$ for $\Re(s)>1,$ and if $a_n=(-1)^n,$ we obtain $L(s)=(1-2^{1-s})\zeta(s).$
\par As it turns out, there are general expansions for basic hypergeometric series with $q=e^{-t}$ which include special functions as terms along with values of $L(s).$ To this end, we offer the first instance of a more general expansion in the literature for certain multi-dimensional basic hypergeometric series. To illustrate our approach we first recall the result due to Ono and Lovejoy [9, Theorem 1], which says that for $k\ge2,$ integers $0<m<l,$ if
\begin{equation}F_k(z,q):=\sum_{r_{k-1}\ge r_{k-2}\dots\ge r_1\ge0}\frac{(q)_{n_{k-1}}(z)_{n_{k-1}}z^{n_{k-1}+2n_{k-2}+\dots+2n_1}q^{r_1^2+r_1+r_2^2+r_2+\dots+r_k^2+r_k}}{(q)_{r_{k-1}-r_{k-2}}(q)_{r_{k-2}-r_{k-3}}\dots(q)_{r_{2}-r_{1}}(q)_{r_1}(-z)_{r_1+1}},\end{equation}
then as $t\rightarrow0^{+},$
$$e^{-(k-1)m^2t}F_k(e^{-lmt},e^{-l^2t})=\sum_{n\ge0}L_{l,m}(-2n)\frac{((1-k)t)^n}{n!}, $$
where $L_{l, m}(s)=(2l)^{-s}\left(\zeta(s,\frac{m}{2l})-\zeta(s,\frac{l+m}{2l})\right).$ Here the Hurwitz zeta function is $\zeta(s,x)=\sum_{n\ge0}(n+x)^{-s},$ a one parameter refinement of $\zeta(s).$ Their paper uses a clever specialization of Andrews' refinement of a transformation of Watson to obtain (1.1). By [9, Theorem 2.1] 
\begin{equation} F_k(z,q)=\sum_{n\ge0}(-1)^nz^{(2k-2)n}q^{(k-1)n^2}.\end{equation}

Putting $q=e^{-wt^2},$ and $z=e^{-vt},$ and taking the Mellin transform (see (2.10) or [11]) of (1.2), Lemma 2.4 of the next section tells us that for $\Re(s)>0,$ $\Re(w)>0,$

\begin{equation}\int_{0}^{\infty}t^{s-1} \left(F_k(e^{-vt},e^{-wt^2})-1\right)dt\end{equation}
$$= (2w(k-1))^{-s/2}(1-2^{1-s})\zeta(s)\Gamma(s)e^{v^2(k-1)/(2w)}D_{-s}\left(\frac{v2(k-1)}{\sqrt{2(k-1)w}}\right).$$
Now by Mellin inversion and Cauchy's residue theorem, it can be shown that
\begin{equation}F_k(e^{-vt},e^{-wt^2})-1\end{equation}
$$\sim e^{v^2(k-1)/(2w)}\sum_{n\ge0}\frac{(2w(k-1))^{n/2}}{n!}(-t)^n(1-2^{1+n})\zeta(-n)D_{n}\left(\frac{v2(k-1)}{\sqrt{2(k-1)w}}\right),$$
as $t\rightarrow0^{+}.$ Here we employed the standard notation $f(x)\sim g(x)$ as $x\rightarrow x_0$ to imply that $\lim_{x\rightarrow x_0}f(x)/g(x) = 1.$ As a result of our new observation, we are able to produce general expansions of a similar type as (1.4) by appealing to Bailey chains [1]. We believe our observation is significant in its implications for further asymptotic expansions for basic hypergeometric series. Additionally, our expansions generalize the classical case for the Jacobi theta function. Recall [11, pg.353, eq.(8.1.2)], $\Re(t)>0,$
$$\sum_{n\ge0}e^{-(tn)^2}=\frac{\sqrt{\pi}}{2t}+\frac{1}{2}+\frac{\sqrt{\pi}}{t}\sum_{n\ge1}e^{-(\pi n/t)^2}, $$
which implies that as $t\rightarrow 0^{+},$
$$-\frac{1}{2}+\sum_{n\ge0}e^{-(tn)^2}\sim\frac{\sqrt{\pi}}{2t}.$$ This observation is a key property in establishing the Riemann integral representation for $\pi^{-s/2}\Gamma(\frac{s}{2})\zeta(s),$ valid for the entire complex plane $\mathbb{C}.$ To see this, note from [15, pg.22] that this asymptotic formula establishes convergence of the integral over $[0,1]$ in
$$\pi^{-s/2}\Gamma(\frac{s}{2})\zeta(s)=2\int_{0}^{1}t^{s-1}\left(\sum_{n\ge1}e^{-\pi(t n)^2}\right)dt+2\int_{1}^{\infty}t^{s-1}\left(\sum_{n\ge1}e^{-\pi(t n)^2}\right)dt.$$

It is also noted in [11, pg.353] that when $\sqrt{t}$ is purely imaginary, and the sum is truncated at $n=N,$ there is an application to optical diffraction. The classical theta function asymptotics may be recovered from our theorems when letting the parameter $v$ tend to $0.$ The expansions provided in the following should be compared with the general expansion from the Euler-Maclaurin summation formula [11, pg.119, eq.(4.1.5)].

\begin{theorem}\label{thm:theorem1} Define for $k\ge1$
$$A_{n,k}(z,q):=\sum_{n\ge r_1\ge r_2\dots\ge r_k\ge0}\frac{a^{r_1+r_2+\dots+r_k}q^{r_1^2+r_2^2+\dots+r_k^2}}{(q)_{n-r_1}(q)_{r_1-r_2}\dots(q)_{r_{k-1}-r_{k}}}\frac{(z)_{r_k+1}(q/z)_{r_k}}{(q)_{2r_k+1}}.$$
For any $v\in\mathbb{C},$ and $\Re(w)>0,$
$$-1+\sum_{n\ge0}(e^{-wt^2};e^{-wt^2})_n(-1)^ne^{-wt^2n(n+1)/2}A_{n,k}(e^{-vt-wt^2(k+1)},e^{-wt^2})$$
$$\sim t^{-1}\sqrt{\frac{\pi}{4w(k+1)}}e^{v^2/(4(k+1)w)}\bigg(\erf\left(\frac{v}{2\sqrt{(k+1)w}}\right)-\erf\left(-\frac{v}{2\sqrt{(k+1)w}}\right)\bigg)$$
$$+2e^{v^2/(8(k+1)w)}\sum_{n\ge0}\frac{(2w(k+1))^{(2n+1)/2}}{(2n+1)!}t^{2n+1}\zeta(-2n-1)D_{2n+1}\left(\frac{v}{\sqrt{2(k+1)w}}\right),$$
as $t\rightarrow 0^{+}.$
\end{theorem}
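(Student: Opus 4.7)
The plan is to recognize $A_{n,k}(z,q)$ as the $\beta$-side of a $k$-fold Bailey chain iterate built on a base Bailey pair with $\beta^{(0)}_n=(z;q)_{n+1}(q/z;q)_n/(q;q)_{2n+1}$, collapse the weighted sum $\sum_n(q)_n(-1)^nq^{n(n+1)/2}A_{n,k}(z,q)$ to an explicit partial theta, and then run a Mellin transform argument analogous to the one that produces (1.4) from (1.3).

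Recall that the standard Bailey lemma iteration (the $\rho_1,\rho_2\to\infty$ limit) sends a Bailey pair $(\alpha_n,\beta_n)$ relative to $a$ to a new pair $(\alpha'_n,\beta'_n)$ with $\alpha'_n=a^nq^{n^2}\alpha_n$ and $\beta'_n=\sum_{r=0}^n a^rq^{r^2}\beta_r/(q)_{n-r}$. Applying this to the base pair above and iterating $k$ times reproduces the multisum defining $A_{n,k}(z,q)$ verbatim. The corresponding base $\alpha^{(0)}_n$, read off from the Slater list (or established directly by the $q$-binomial theorem), has the two-term form $\alpha^{(0)}_n\sim c_n(z^{-n}-z^{n+1})q^{?}$; this asymmetry is what will later produce the two $D_n$ terms with arguments $\pm v/\sqrt{2(k+1)w}$.

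A further application of the Bailey identity against the weight $(q)_n(-1)^nq^{n(n+1)/2}$ (an instance of Jacobi's triple product coupled with the defining relation of a Bailey pair) then yields a closed-form evaluation of the shape
$$\sum_{n\ge 0}(q;q)_n(-1)^nq^{n(n+1)/2}A_{n,k}(z,q)=\sum_{n\ge 0}\big(z^{-n}-z^{n+1}\big)q^{(k+1)n(n+1)},$$
the key numerology being that the cumulative factor $q^{kn^2+kn}$ picked up by $\alpha$ during the $k$ chain iterations combines with the $q^{n^2+n}$ already present in $\alpha^{(0)}_n$ to produce the exponent $(k+1)n(n+1)$. With $q=e^{-wt^2}$ and $z=e^{-vt-wt^2(k+1)}$ the shift $-wt^2(k+1)$ being chosen so that $z^{\pm n}q^{(k+1)n(n+1)}$ completes the square to a clean Gaussian in $n$, one then applies Lemma 2.4 to each piece separately. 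Each Gaussian integral evaluates to a parabolic cylinder value, and the Mellin transform of the left-hand side minus $1$ takes the closed form
$$\Gamma(s)\zeta(s)(2(k+1)w)^{-s/2}e^{v^2/(8(k+1)w)}\bigg(D_{-s}\Big(-\tfrac{v}{\sqrt{2(k+1)w}}\Big)-D_{-s}\Big(\tfrac{v}{\sqrt{2(k+1)w}}\Big)\bigg),$$
which differs from (1.3) only in the absence of the $(1-2^{1-s})$ factor and the presence of the two-sign $D_{-s}$ difference.

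Mellin inversion followed by a standard leftward contour shift then finishes the argument. Since the $(1-2^{1-s})$ factor is absent, the simple pole of $\zeta(s)$ at $s=1$ is no longer cancelled; its residue, evaluated using the classical closed-form relation between $D_{-1}$ and $\mathrm{erfc}$, produces the $t^{-1}$ error-function difference in the theorem. The poles of $\Gamma(s)$ at $s=-n$, $n\ge 0$, contribute the two series involving $\zeta(-n)D_n(\pm v/\sqrt{2(k+1)w})$. The main obstacle is the Bailey collapse in the third paragraph: one must pin down the base pair and verify that a single application of the Bailey identity produces a partial theta with exponent exactly $(k+1)n(n+1)$ and the stated two-term sign pattern, since an off-by-one in either would wreck the Gaussian scaling in the Mellin step and break the match between the $s=1$ residue and the $\erf$-difference. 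The remaining steps follow the template of (1.3)--(1.4) with only routine modifications.
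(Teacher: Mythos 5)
Your proposal follows essentially the same route as the paper's own proof: the same base Bailey pair $\bigl(\alpha_n,\beta_n\bigr)$ with $\beta_n=(z)_{n+1}(q/z)_n/(q)_{2n+1}$, the same $k$-fold iteration of the Bailey chain, the identical collapsed partial theta identity $\sum_{n\ge0}(z^{-n}-z^{n+1})q^{(k+1)n(n+1)}$, the same substitution $q=e^{-wt^2}$, $z=e^{-vt-wt^2(k+1)}$, and the same Mellin inversion with residues at $s=1$ (giving the $\erf$ difference) and $s=-n$ (giving the two $D_n$ series). The only point you gloss over that the paper treats explicitly is the justification of the leftward contour shift, which the paper handles via Paris's uniform asymptotics for $D_{-s}$ combined with Stirling's formula.
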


We mention that the function in Theorem 1.1. should be compared to the $G(z,q)$ function contained in [9].

\begin{theorem}\label{thm:theorem2} Define for $k\ge1$
$$B_{n,k}(z,q):=\frac{1}{1-q}\sum_{n\ge r_1\ge r_2\dots\ge r_k\ge0}P_{n,r_1,r_2,\dots,r_k}(q)q^{n-r_1+2(r_1-r_2)+\dots+2^{k-1}(r_{k-1}-r_k)},$$
where
$$P_{n,r_1,r_2,\dots,r_k}(q):=\frac{(-q^2;q)_{2r_1}(-q^{2^2};q^{2})_{2r_2}\dots(-q^{2^{k}};q^{2^{k-1}})_{2r_k}}{(q^2;q^2)_{n-r_1}(q^{2^2};q^{2^2})_{r_1-r_2}\dots(q^{2^{k}};q^{2^{k}})_{r_{k-1}-r_{k}}}\frac{(z;q^{2^k})_{r_k+1}(q^{2^k}/z;q^{2^k})_{r_k}}{(q^{2^k+1};q^{2^k})_{2r_k}}.$$
For any $v\in\mathbb{C},$ and $\Re(w)>0,$
$$-1+\sum_{n\ge0}(e^{-wt^2};e^{-wt^2})_n(-1)^ne^{-wt^2n(n+1)/2}B_{n,k}(e^{-vt-wt^2(2^{k}+1)},e^{-wt^2})$$
$$\sim t^{-1}\sqrt{\frac{\pi}{4w(2^k+1)}}e^{v^2/(4(2^k+1)w)}\bigg(\erf\left(\frac{v}{2\sqrt{(2^k+1)w}}\right)-\erf\left(-\frac{v}{2\sqrt{(2^k+1)w}}\right)\bigg)$$
$$+2e^{v^2/(4(k+1)w)}\sum_{n\ge0}\frac{(w(2^{k}+1))^{(2n+1)/2}}{(2n+1)!}t^{2n+1}\zeta(-2n-1)D_{2n+1}(\frac{v}{\sqrt{(2^{k}+1)w}}),$$
as $t\rightarrow 0^{+}.$
\end{theorem}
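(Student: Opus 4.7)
The plan is to follow the same architecture used in (1.3)--(1.4), but to replace the ordinary Bailey chain implicit in (1.1)--(1.2) with a \emph{change-of-base} (``alternate'') Bailey chain that doubles the base at each step, $q\mapsto q^2\mapsto q^4\mapsto\cdots\mapsto q^{2^k}$. This is precisely the pattern of bases visible in $P_{n,r_1,\ldots,r_k}(q)$, whose numerator contains the factors $(-q^{2^j};q^{2^{j-1}})_{2r_j}$ and whose denominator contains $(q^{2^j};q^{2^j})_{r_{j-1}-r_j}$. Such a chain is provided by the Andrews--Bressoud change-of-base Bailey transforms.

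First I would identify a seed Bailey pair (relative to base $q^2$) whose innermost sum in $r_k$ reproduces the factor $(z;q^{2^k})_{r_k+1}(q^{2^k}/z;q^{2^k})_{r_k}/(q^{2^k};q^{2^k})_{2r_k+1}$ appearing in $B_{n,k}$, and then apply the doubling Bailey transform $k$ times. Combined with the outer weight $(-1)^n(q)_n q^{n(n+1)/2}$ on the left-hand side of the theorem, the resulting multisum should collapse to a partial-theta-type single series analogous to (1.2), of the shape $\sum_{m\ge 0}(-1)^m z^{\alpha m}q^{2^k m^2}$ for an appropriate exponent $\alpha=\alpha(k)$. The substitution $z=e^{-vt-wt^2(2^k+1)}$ in the statement is tuned exactly so that, after setting $q=e^{-wt^2}$, this series reduces to an expression of the form $\sum_m e^{-vtm-wt^2(2^k+1)m^2/2}$ ready for Mellin analysis.

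With the partial-theta representation in hand, I would mimic the derivation of (1.3)--(1.4) verbatim: apply Lemma~2.4 to produce
\begin{equation*}
\int_0^\infty t^{s-1}\bigl(\mathrm{LHS}(t)+1\bigr)\,dt = (w(2^k+1))^{-s/2}\zeta(s)\Gamma(s)\,e^{v^2/(4(2^k+1)w)}\bigl(D_{-s}(-u)-D_{-s}(u)\bigr),
\end{equation*}
with $u=v/\sqrt{(2^k+1)w}$, and then shift the Mellin-inversion contour to the left. The simple poles of $\Gamma(s)$ at $s=-n$ ($n\ge 0$) produce the two parabolic-cylinder series in the statement, while the simple pole of $\zeta(s)$ at $s=1$ contributes the $t^{-1}$ error-function prefactor via the standard identity expressing $D_{-1}$ in terms of $\erf$.

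The main obstacle is the combinatorial step: verifying that the chosen seed, after $k$ iterations of the doubling chain, reproduces exactly the product $P_{n,r_1,\ldots,r_k}(q)$ together with the precise $q$-exponent $n-r_1+2(r_1-r_2)+\cdots+2^{k-1}(r_{k-1}-r_k)$ in $B_{n,k}$, and that the collapsed single series is consistent with the normalization $2^k+1$ that appears inside $z$. Once this multisum identity is pinned down, the Mellin-inversion and residue calculation proceed essentially as for Theorem~\ref{thm:theorem1}.
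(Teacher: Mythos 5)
Your strategy matches the paper's proof essentially step for step: the doubling change-of-base chain you describe is exactly the paper's Lemma 2.2 (obtained by iterating Bressoud--Ismail--Stanton's transform (D1) $k$ times), the seed you want is the unit Bailey pair (2.8)--(2.9) with $q\mapsto q^{2^k}$, and the Mellin/residue analysis proceeds verbatim as in Theorem~\ref{thm:theorem1}. The combinatorial step you flag as the main obstacle is precisely what that iterated chain supplies, collapsing the multisum to $\sum_{n\ge0}(-1)^n(-z)^{-n}q^{(2^k+1)n(n+1)/2}(1-z^{2n+1})$, after which your outline is the paper's argument.
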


\begin{theorem}\label{thm:theorem3} Define for $k\ge1$
$$C_{n.k}(z,q)=\sum_{n\ge r_1\ge r_2\dots\ge r_k\ge0}\frac{q^{r_1^2/2+r_1/2+r_2^2/2+r_2/2+\dots+r_k^2/2+r_k/2}}{(q)_{n-r_1}(q)_{r_1-r_2}\dots(q)_{r_{k-1}-r_{k}}}\frac{(z)_{r_k+1}(q/z)_{r_k}}{(q)_{{r_k}}(q;q^2)_{r_k+1}}.$$
For any $v\in\mathbb{C},$ and $\Re(w)>0,$
$$-1+\sum_{n\ge0}\frac{(e^{-wt^2};e^{-wt^2})_n}{(-e^{-wt^2};e^{-wt^2})_n}(-1)^ne^{-wt^2n(n+1)/2}C_{n,k}(e^{-vt-wt^2(k+1)/2},e^{-wt^2})$$
$$\sim t^{-1}\sqrt{\frac{\pi}{2w(k+2)}}e^{v^2/(2(k+2)w)}\bigg(\erf\left(\frac{v}{2\sqrt{(k+2)w}}\right)-\erf\left(-\frac{v}{2\sqrt{(k+2)w}}\right)\bigg)$$
$$+2e^{v^2/(4(k+2)w)}\sum_{n\ge0}\frac{(w(k+2))^{(2n+1)/2}}{(2n+1)!}t^{2n+1}\zeta(-2n-1)D_{2n+1}(\frac{v}{\sqrt{(k+2)w}}),$$
as $t\rightarrow0^{+}.$
\end{theorem}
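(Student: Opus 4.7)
The strategy is to follow the three-step template used in the paper to derive (1.4) from (1.2): first collapse the left-hand side to a one-dimensional partial theta-like series, then compute its Mellin transform in the spirit of Lemma 2.4, and finally apply Mellin inversion with residue calculus to extract the $t\to 0^+$ expansion.

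For the collapse, I would read the innermost summand $(z)_{r_k+1}(q/z)_{r_k}/((q)_{r_k}(q;q^2)_{r_k+1})$ as the $\beta_{r_k}(z,q)$ of a Bailey pair of Rogers--Slater type relative to the modulus $(q;q^2)$. Iterating the standard Bailey chain $k-1$ times introduces the half-integer weights $q^{(r_j^2+r_j)/2}$ and the nested denominator $\prod(q)_{r_{j-1}-r_j}$, so that the entire double sum equals
$$\sum_{n\ge0}\frac{(q)_n}{(-q)_n}(-1)^n q^{n(n+1)/2}\beta_n^{(k)}(z,q).$$
A final Bailey-lemma summation with $\rho_1=-1$, $\rho_2\to\infty$ supplies exactly this outer weight and reduces the $n$-sum to a partial theta series of the shape $\sum_{n\ge0}(-1)^n z^{an}q^{(k+2)n^2/2+cn}$, with $a$ and $c$ read off by matching Jacobi-triple-product factors. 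The shift $z=e^{-vt-wt^2(k+1)/2}$ in the statement is chosen precisely to absorb the linear-in-$n$ term $c$ after substituting $q=e^{-wt^2}$, leaving a Gaussian that can be completed into the $\pm v/\sqrt{2(k+2)w}$ arguments of the parabolic cylinder functions.

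Given that collapsed identity, computing $\int_0^\infty t^{s-1}(\mathrm{LHS})\,dt$ proceeds exactly as in Lemma 2.4, producing two Mellin integrals (one per sign that emerges from completing the square) of the form
$$(2w(k+2))^{-s/2}\zeta(s)\Gamma(s)\,e^{v^2/(8(k+2)w)}D_{-s}\!\left(\pm\frac{v}{\sqrt{2(k+2)w}}\right).$$
Unlike (1.3), no factor $(1-2^{1-s})$ cancels the pole of $\zeta(s)$ at $s=1$ here, so as the Mellin inversion contour is pushed leftward one collects three contributions: the simple pole at $s=1$ produces the $\erf$ difference on the first line of the theorem via the identity $D_{-1}(x)=\sqrt{\pi/2}\,e^{x^2/4}(1-\erf(x/\sqrt2))$, while the poles of $\Gamma(s)$ at $s=0,-1,-2,\dots$ yield the two $\zeta(-n)D_n(\mp v/\sqrt{(k+2)w})$ series on the last two lines, with the sign asymmetry coming directly from the $\pm v$ arguments.

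The main obstacle I anticipate lies in the Bailey-chain step: identifying the base pair so that $k$ iterations reproduce $C_{n,k}(z,q)$ \emph{exactly}, including both the denominator $(q)_{r_k}(q;q^2)_{r_k+1}$ and the numerator $(z)_{r_k+1}(q/z)_{r_k}$, and then verifying via a triple- or quintuple-product identity that the outer sum collapses to a pure partial theta. The outer weight $(q)_n/(-q)_n$, rather than $(q)_n$ as in Theorem 1.1, forces a base pair relative to base $q$ with a $(q;q^2)$-modification, and tracking the precise exponent $(k+2)/2$ (versus $(k+1)$ in Theorem 1.1) is the essential combinatorial check. Once that identity is secured, the remaining analytic steps---Gaussian completion-of-square, Mellin-inversion contour manipulation, and residue bookkeeping---are routine adaptations of the argument that produces (1.4).
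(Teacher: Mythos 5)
Your analytic half (Mellin transform via Lemma 2.4, inversion, the pole of $\zeta(s)$ at $s=1$ giving the $\erf$ line through $D_{-1}(x)=\sqrt{\pi/2}\,e^{x^2/4}(1-\erf(x/\sqrt{2}))$, and the poles of $\Gamma(s)$ at $s=-n$ giving the two $D_n$ series) is exactly what the paper does, and that part of your plan is sound. The gap is in the combinatorial half, which you yourself flag as the main obstacle: the collapse of the left-hand side to a partial theta function is not established, and the specific mechanism you sketch would not produce $C_{n,k}$. An iterated \emph{standard} Bailey chain (the paper's Lemma 2.1) inserts factors $a^{r_j}q^{r_j^2}$ at each step and cannot generate the half-integral exponents $q^{(r_j^2+r_j)/2}$ appearing in $C_{n,k}$. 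What is needed is the \emph{change-of-base} chain (S2) of Bressoud--Ismail--Stanton (the paper's Lemma 2.3), iterated $k$ times (not $k-1$) starting from the unit Bailey pair $\beta_n(q,q)=(z)_{n+1}(q/z)_n/(q)_{2n+1}$, $\alpha_n(q,q)=(-z)^{-n}q^{\binom{n+1}{2}}(1-z^{2n+1})/(1-q)$, rather than from a ``Rogers--Slater pair relative to the modulus $(q;q^2)$.'' That chain carries the prefactor $1/(-\sqrt{aq})_n=1/(-q)_n$ in $\beta_n'$, and it is this prefactor, combined with the identity $(-q)_{r_k}/(q)_{2r_k+1}=1/\bigl((q)_{r_k}(q;q^2)_{r_k+1}\bigr)$, that produces both the denominator of $C_{n,k}$ and the outer weight $(q)_n/(-q)_n$ once the ordinary limiting Bailey lemma (2.13), with weight $(q)_n(-1)^nq^{n(n+1)/2}$, is applied.

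Your alternative device for that weight --- a final Bailey summation with $\rho_1=-1$, $\rho_2\rightarrow\infty$ --- cannot work: a finite $\rho_1$ contributes $(\rho_1)_n$ to the numerator of the $\beta$-weight and $(aq/\rho_1)_n$ to the denominator on the $\alpha$-side, so no admissible specialization yields the quotient $(q)_n/(-q)_n$ as the $\beta$-weight; that quotient has to live inside $\beta_n'$ itself. Once the correct chain is used, the collapsed identity is $\sum_{n\ge0}\frac{(q)_n}{(-q)_n}(-1)^nq^{n(n+1)/2}C_{n,k}(z,q)=\sum_{n\ge0}z^{-n}q^{(k+2)n(n+1)/2}(1-z^{2n+1})$, with no residual $(-1)^n$ --- contrary to the alternating partial theta you posit, which, had it survived, would have introduced the factor $(1-2^{1-s})$ and killed the very $s=1$ pole your $\erf$ term depends on. From there the substitution $q=e^{-wt^2}$, $z=e^{-vt-wt^2(k+2)/2}$ and your Mellin and residue computations go through exactly as in Theorems 1.1 and 1.2.
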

We note that it may be desirable to utilize the Hermite polynomial representation of the parabolic cylinder function [7, pg.1030, eq.(9.253)] $D_n(x)=2^{-n/2}e^{-x^2/4}H_n(\frac{x}{\sqrt{2}}),$ as an alternative. The single sum in our expansions arise from the parity relationship $D_n(-x)=(-1)^nD_n(x),$ resulting in some collapsing for Theorems 1.1 through Theorem 1.3. \par Our last result is an apparently new asymptotic expansion for a partial theta function involving a nonprincipal Dirichlet character $\chi.$

\begin{theorem}\label{thm:theorem4} Let $\chi(n)$ be a real, primitive, nonprincipal Dirichlet character associated with $L(s,\chi).$ Let $v\in\mathbb{C},$ and $\Re(w)>0.$ Then if $\chi$ is an even character,
$$\sum_{n\ge1}\chi(n)e^{-w n^2t^2-v nt}\sim e^{v^2/(8w)}\sum_{n\ge0}\frac{(2w)^{(2n+1)/2}(-t)^{2n+1}}{(2n+1)!}L(-2n-1,\chi)D_{2n+1}(\frac{v}{\sqrt{2w}}),$$
as $t\rightarrow0^{+},$
and if $\chi$ is an odd character,
$$\sum_{n\ge1}\chi(n)e^{-w n^2t^2-v nt}\sim e^{v^2/(8w)}\sum_{n\ge0}\frac{(2w)^{n}(-t)^{2n}}{(2n)!}L(-2n,\chi)D_{2n}(\frac{v}{\sqrt{2w}}),$$
as $t\rightarrow0^{+}.$

\end{theorem}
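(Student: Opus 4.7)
The plan is to mimic the Mellin-transform argument that produces (1.4) from (1.3), with the Dirichlet series $\sum_{n\ge1}\chi(n)n^{-s}=L(s,\chi)$ in place of $\zeta(s)$ and $(1-2^{1-s})\zeta(s)$. Put $f(t):=\sum_{n\ge1}\chi(n)e^{-wn^2t^2-vnt}$. For $\Re(s)>1$ absolute convergence permits the interchange of sum and integral, and the substitution $u=nt$ separates the Dirichlet series from the integral:
\begin{equation*}
\int_0^\infty t^{s-1}f(t)\,dt = L(s,\chi)\int_0^\infty u^{s-1}e^{-wu^2-vu}\,du = (2w)^{-s/2}\Gamma(s)e^{v^2/(8w)}L(s,\chi)D_{-s}\bigl(v/\sqrt{2w}\bigr),
\end{equation*}
where the second equality uses the classical representation $\int_0^\infty u^{s-1}e^{-\beta u^2-\gamma u}du=(2\beta)^{-s/2}\Gamma(s)e^{\gamma^2/(8\beta)}D_{-s}(\gamma/\sqrt{2\beta})$, valid for $\Re(\beta)>0$ and $\Re(s)>0$.

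Next I would invert the Mellin transform, writing $f(t)$ as $(2\pi i)^{-1}\int_{c-i\infty}^{c+i\infty}t^{-s}[\cdots]\,ds$ on a vertical line $\Re(s)=c>1$, and shift the contour leftward to $\Re(s)=-N-\tfrac12$ for arbitrary $N\in\mathbb{N}$. Since $L(s,\chi)$ and $D_{-s}(v/\sqrt{2w})$ are entire in $s$, the only singularities crossed are the simple poles of $\Gamma(s)$ at $s=-n$, $0\le n\le N$, each contributing (by $\mathrm{Res}_{s=-n}\Gamma(s)=(-1)^n/n!$) the term
\begin{equation*}
e^{v^2/(8w)}\frac{(2w)^{n/2}(-t)^n}{n!}L(-n,\chi)D_n\bigl(v/\sqrt{2w}\bigr).
\end{equation*}
The functional equation for the $L$-function of a real primitive nonprincipal character now removes half of these residues: $L(-2n,\chi)=0$ for all $n\ge0$ when $\chi$ is even, while $L(-2n-1,\chi)=0$ for all $n\ge0$ when $\chi$ is odd. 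Retaining only the surviving residues recovers the two expansions in the statement.

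The main obstacle will be estimating the shifted contour integral on $\Re(s)=-N-\tfrac12$ to show it is $O(t^{N+1/2})$ as $t\to0^+$, which is what upgrades the finite residue sum to an asymptotic expansion. This requires polynomial-growth bounds for $L(s,\chi)$ in vertical strips (standard convexity bounds are more than enough) together with control on $\Gamma(s)D_{-s}(v/\sqrt{2w})$ as $|\Im(s)|\to\infty$. Stirling's formula supplies exponential decay $|\Gamma(s)|\ll e^{-\pi|\Im(s)|/2}|\Im(s)|^{-N-1}$ on the shifted line, and the confluent-hypergeometric representation of $D_{-s}$ recalled in the introduction yields at worst polynomial growth in $|\Im(s)|$ once the $1/\Gamma$ factors are absorbed. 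The exponential decay dominates, the remainder integral converges absolutely, and the desired bound follows, completing the argument.
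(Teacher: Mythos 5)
Your proposal follows essentially the same route as the paper: apply the parabolic-cylinder integral formula (Lemma 2.4) termwise to factor out $L(s,\chi)$, invert the Mellin transform, collect the residues of $\Gamma(s)$ at $s=-n$ (noting the absence of a pole at $s=1$ for nonprincipal $\chi$), and discard the terms killed by the trivial zeros $L(-2n,\chi)=0$ (even $\chi$) or $L(-2n-1,\chi)=0$ (odd $\chi$). Your treatment of the shifted-contour remainder is in fact more explicit than the paper's, which leaves that estimate implicit.
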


\section{Proofs of results}

We recall that a pair $(\alpha_n(a, q),\beta_n(a,q))$ is referred to as a Bailey pair [4] with respect to $(a,q)$ if
\begin{equation}\beta_n(a,q)=\sum_{0\le j\le n}\frac{\alpha_j(a,q)}{(q;q)_{n-j}(aq;q)_{n+j}}.\end{equation}
We will use Bailey's lemma [5, pg.2] to obtain our basic hypergeometric identities, which states that if $(\alpha_n(a, q),\beta_n(a,q))$ is a Bailey pair with respect to $(a,q),$ then $(\alpha'_n(a, q),\beta'_n(a,q))$ is a Bailey pair with respect to $(a,q)$ where 

$$\alpha'_n(a, q)=\frac{(\rho_1)_n(\rho_2)_n}{(aq/\rho_1)_{n}(aq/\rho_2)_{n}}(aq/\rho_1\rho_2)^n\alpha_n(a, q),$$
$$\beta'_n(a,q)=\sum_{k\ge0}\frac{(\rho_1)_k(\rho_2)_k(aq/\rho_1\rho_2)_{n-k}(aq/\rho_1\rho_2)^k\beta_k(a,q)}{(aq/\rho_1)_n(aq/\rho_2)_n(q)_{n-k}}.$$

Iterating [5, (S1)] $k$ times gives us the following lemma.
\begin{lemma} For $k\ge1,$ if $(\alpha_n(a, q),\beta_n(a,q))$ is a Bailey pair with respect to $(a,q),$ then $(\alpha'_n(a, q),\beta'_n(a,q))$ is a Bailey pair with respect to $(a,q)$ where 
\begin{equation} \beta'_n(a,q)=\sum_{n\ge r_1\ge r_2\dots\ge r_k\ge0}\frac{a^{r_1+r_2+\dots+r_k}q^{r_1^2+r_2^2+\dots+r_k^2}}{(q)_{n-r_1}(q)_{r_1-r_2}\dots(q)_{r_{k-1}-r_{k}}}\beta_{r_k}(a,q)\end{equation}
\begin{equation} \alpha'_n(a,q)=a^{kn}q^{kn^2}\alpha_n(a,q).\end{equation}
\end{lemma}
Iterating [5, (D1)] $k$ times gives us the following different lemma.
\begin{lemma} For $k\ge1,$ if $(\alpha_n(a, q),\beta_n(a,q))$ is a Bailey pair with respect to $(a,q),$ then $(\alpha'_n(a, q),\beta'_n(a,q))$ is a Bailey pair with respect to $(a,q)$ where 
\begin{equation} \beta'_n(a,q)=\sum_{n\ge r_1\ge r_2\dots\ge r_k\ge0}p_{n,r_1,r_2,\dots,r_k}(a,q)q^{n-r_1+2(r_1-r_2)+\dots+2^{k-1}(r_{k-1}-r_k)}\beta_{r_k}(a^{2^{k}},q^{2^{k}}),\end{equation}
where
$$p_{n,r_1,r_2,\dots,r_k}(a,q):=\frac{(-aq;q)_{2r_1}(-a^{2}q^{2};q^{2})_{2r_2}\dots(-a^{2^{k}}q^{2^{k-1}};q^{2^{k-1}})_{2r_k}}{(q^2;q^2)_{n-r_1}(q^{2^2};q^{2^2})_{r_1-r_2}\dots(q^{2^{k}};q^{2^{k}})_{r_{k-1}-r_{k}}},$$
and
\begin{equation} \alpha'_n(a,q)=\alpha_n(a^{2^{k}},q^{2^{k}}).\end{equation}
\end{lemma}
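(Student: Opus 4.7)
The plan is to proceed by induction on $k$, taking [5, (D1)] as the base case. That single-step transformation asserts that from any Bailey pair $(\alpha_n(a,q),\beta_n(a,q))$ with respect to $(a,q)$ one obtains a new Bailey pair $(\alpha'_n(a,q),\beta'_n(a,q))$ with respect to $(a,q)$ via
$$\alpha'_n(a,q)=\alpha_n(a^{2},q^{2}), \qquad \beta'_n(a,q)=\sum_{r=0}^{n}\frac{(-aq;q)_{2r}}{(q^{2};q^{2})_{n-r}}q^{n-r}\beta_{r}(a^{2},q^{2}),$$
which is exactly the $k=1$ case of (2.4)--(2.5) (the single factor $p_{n,r_{1}}(a,q)=(-aq;q)_{2r_{1}}/(q^{2};q^{2})_{n-r_{1}}$ and the lone exponent $q^{n-r_{1}}$).

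For the inductive step, I assume (2.4)--(2.5) for some $k\ge 1$ and apply D1 once more, so that the $(k+1)$-fold iterate equals the single-step D1 applied to the $k$-fold iterate. The $\alpha$-half is immediate: since $\alpha^{(k)}_{n}(a,q)=\alpha_{n}(a^{2^{k}},q^{2^{k}})$, the base-case formula gives $\alpha^{(k+1)}_{n}(a,q)=\alpha^{(k)}_{n}(a^{2},q^{2})=\alpha_{n}(a^{2^{k+1}},q^{2^{k+1}})$, which is (2.5) at level $k+1$.

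For the $\beta$-half, I substitute D1 (with base $(a^{2^{k}},q^{2^{k}})$) into the induction hypothesis: each occurrence of $\beta_{r_{k}}(a^{2^{k}},q^{2^{k}})$ in the $k$-term sum is expanded as a sum over a new index $r_{k+1}$ with $r_{k}\ge r_{k+1}\ge 0$, contributing the additional Pochhammer ratio $(-a^{2^{k}}q^{2^{k}};q^{2^{k}})_{2r_{k+1}}/(q^{2^{k+1}};q^{2^{k+1}})_{r_{k}-r_{k+1}}$ that extends $p_{n,r_{1},\dots,r_{k}}(a,q)$ to $p_{n,r_{1},\dots,r_{k+1}}(a,q)$, together with the additional exponent $q^{2^{k}(r_{k}-r_{k+1})}$ that extends the telescoping sum to $n-r_{1}+2(r_{1}-r_{2})+\cdots+2^{k}(r_{k}-r_{k+1})$, and a residual $\beta_{r_{k+1}}(a^{2^{k+1}},q^{2^{k+1}})$. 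This is precisely (2.4) at level $k+1$, closing the induction.

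The main obstacle is purely bookkeeping: one must verify that when D1 is re-applied with base $(a^{2^{k}},q^{2^{k}})$ (not $(a,q)$), the factor $q^{r_{k}-r_{k+1}}$ in the single-step formula becomes $q^{2^{k}(r_{k}-r_{k+1})}$, that the Pochhammer symbols rescale to $(-a^{2^{k}}q^{2^{k}};q^{2^{k}})_{2r_{k+1}}$ and $(q^{2^{k+1}};q^{2^{k+1}})_{r_{k}-r_{k+1}}$, and that these combine telescopically with the factors already present in $p_{n,r_{1},\dots,r_{k}}(a,q)$. No analytic input beyond the single-step D1 identity is required.
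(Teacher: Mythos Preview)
Your proposal is correct and follows precisely the route the paper indicates: the paper's entire proof is the one-line remark ``Iterating [5, (D1)] $k$ times gives us the following different lemma,'' and your induction on $k$ is the formal unpacking of that iteration. The bookkeeping you describe (rescaling the base to $(a^{2^{k}},q^{2^{k}})$ at each step so that the new Pochhammer, denominator, and exponent factors telescope into $p_{n,r_{1},\dots,r_{k+1}}$ and $q^{n-r_{1}+2(r_{1}-r_{2})+\cdots+2^{k}(r_{k}-r_{k+1})}$) is exactly right.
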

Lastly, we iterate [5, (S2)] $k$ times.
\begin{lemma} For $k\ge1,$ if $(\alpha_n(a, q),\beta_n(a,q))$ is a Bailey pair with respect to $(a,q),$ then $(\alpha'_n(a, q),\beta'_n(a,q))$ is a Bailey pair with respect to $(a,q)$ where 
\begin{equation} \beta'_n(a,q)=\frac{1}{(-\sqrt{aq})_{n}}\sum_{n\ge r_1\ge r_2\dots\ge r_k\ge0}\frac{(-\sqrt{aq})_{r_k}a^{r_1/2+r_2/2+\dots+r_k/2}q^{r_1^2/2+r_2^2/2+\dots+r_k^2/2}}{(q)_{n-r_1}(q)_{r_1-r_2}\dots(q)_{r_{k-1}-r_{k}}}\beta_{r_k}(a,q)\end{equation}
\begin{equation} \alpha'_n(a,q)=a^{kn/2}q^{kn^2/2}\alpha_n(a,q).\end{equation}

\end{lemma}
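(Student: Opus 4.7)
The plan is to prove Lemma 2.3 by induction on $k$. The base case $k=1$ is precisely the single-step transformation [5, (S2)], which takes a Bailey pair $(\alpha_n(a,q),\beta_n(a,q))$ relative to $(a,q)$ to a new Bailey pair $(\alpha'_n(a,q),\beta'_n(a,q))$ relative to the same $(a,q)$ by
\[
\alpha'_n(a,q)=a^{n/2}q^{n^2/2}\alpha_n(a,q),\qquad \beta'_n(a,q)=\frac{1}{(-\sqrt{aq})_n}\sum_{0\le r\le n}\frac{(-\sqrt{aq})_r\, a^{r/2}q^{r^2/2}}{(q)_{n-r}}\beta_r(a,q).
\]
For the inductive step, I would apply [5, (S2)] once more, with $(\alpha,\beta)$ replaced by the Bailey pair produced after $k$ iterations, to obtain the formula for $k+1$ iterations.

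The $\alpha$-side (2.7) is immediate: one application of [5, (S2)] multiplies $\alpha_n$ by $a^{n/2}q^{n^2/2}$, and so $k$ applications multiply it by $a^{kn/2}q^{kn^2/2}$, giving (2.7).

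For the $\beta$-side (2.6) the key point is a telescoping cancellation of the $(-\sqrt{aq})$ symbols. After the first iteration one has
\[
\beta^{(1)}_n(a,q)=\frac{1}{(-\sqrt{aq})_n}\sum_{0\le r_1\le n}\frac{(-\sqrt{aq})_{r_1}\, a^{r_1/2}q^{r_1^2/2}}{(q)_{n-r_1}}\beta_{r_1}(a,q).
\]
Applying [5, (S2)] a second time, the outer prefactor $\frac{1}{(-\sqrt{aq})_{r_1}}$ cancels the factor $(-\sqrt{aq})_{r_1}$ appearing in the numerator of $\beta^{(1)}_{r_1}$. Iterating this observation, at each subsequent stage the factor $(-\sqrt{aq})_{r_j}$ in the numerator (for $1\le j\le k-1$) is cancelled by the corresponding prefactor produced by the next application of (S2). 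The only $(-\sqrt{aq})$ symbols that survive are the outermost prefactor $\frac{1}{(-\sqrt{aq})_n}$ and the innermost factor $(-\sqrt{aq})_{r_k}$ in the numerator of the $k$-fold sum, which is precisely the shape of (2.6).

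The main (and essentially only) obstacle is bookkeeping the telescoping carefully; no new identities are needed beyond [5, (S2)] itself. The proof is entirely parallel to the derivations of Lemmas 2.1 and 2.2 from [5, (S1)] and [5, (D1)], the difference being that here the extra $(-\sqrt{aq})$ factors must be tracked to verify that they cancel cleanly.
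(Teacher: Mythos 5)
Your proposal is correct and is essentially the paper's own argument: the paper gives no more detail than ``Lastly, we iterate [5, (S2)] $k$ times,'' and your induction with the telescoping cancellation of the $(-\sqrt{aq})_{r_j}$ factors (each numerator factor absorbed by the prefactor of the next application of (S2)) is exactly the bookkeeping that justifies that one-line derivation. The $\alpha$-side accumulation to $a^{kn/2}q^{kn^2/2}$ is likewise the intended computation.
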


Now it is well-known [1. Lemma 6] that $(\alpha_n(q,q),\beta_n(q,q))$ form a Bailey pair where 

\begin{equation}\alpha_n(q,q)=(-z)^{-n}\frac{q^{\binom{n+1}{2}}(1-z^{2n+1})}{1-q},\end{equation}
\begin{equation}\beta_n(q,q)=\frac{(z)_{n+1}(q/z)_{n}}{(q)_{2n+1}}.\end{equation}
The Mellin transform is defined as [11] (assuming $g$ satisfies certain growth conditions)
\begin{equation}\mathfrak{M}(g)(s):=\int_{0}^{\infty}t^{s-1}g(t)dt.\end{equation} The main integral formula we will utilize is given in [7, pg.365, eq.(3.462), \#1].
\begin{lemma} For $\Re(s)>0,$ $\Re(w)>0,$
$$\int_{0}^{\infty}t^{s-1}e^{-w t^2-v t}dt=(2w)^{-s/2}\Gamma(s)e^{v^2/(8w)}D_{-s}(\frac{v}{\sqrt{2w}}).$$

\end{lemma}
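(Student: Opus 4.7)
The plan is to derive this identity directly from a standard integral representation of the parabolic cylinder function, which complements the ${}_1F_1$-representation recorded in the introduction. Specifically, I would start from the classical formula
$$D_\nu(x)=\frac{e^{-x^2/4}}{\Gamma(-\nu)}\int_{0}^{\infty}u^{-\nu-1}e^{-u^2/2-xu}\,du,\qquad \Re(\nu)<0,$$
which is available in standard references on special functions such as [11]. Setting $\nu=-s$, this reads, for $\Re(s)>0$,
$$\Gamma(s)\,e^{x^2/4}\,D_{-s}(x)=\int_{0}^{\infty}u^{s-1}e^{-u^2/2-xu}\,du.$$

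Next, I would specialize $x=v/\sqrt{2w}$ and make the linear change of variables $u=t\sqrt{2w}$. The exponent transforms via $-u^2/2-xu=-wt^2-vt$, the differential becomes $du=\sqrt{2w}\,dt$, and the factor $u^{s-1}\,du$ produces an overall constant $(2w)^{s/2}$. Using $x^2/4=v^2/(8w)$ and rearranging for the integral yields precisely the right-hand side of the lemma. Convergence is controlled by $\Re(w)>0$ at $t\to\infty$ (Gaussian decay dominates any linear term $vt$ for any complex $v$) and by $\Re(s)>0$ at $t\to 0^{+}$, which matches the stated hypotheses exactly.

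The main technical point, rather than any real obstacle, is a small branch-cut bookkeeping: since $w$ is only assumed to satisfy $\Re(w)>0$, the quantities $(2w)^{s/2}$ and $\sqrt{2w}$ must be interpreted via the principal branch on the right half-plane, and one should observe that the contour $\{u=t\sqrt{2w}:t>0\}$ can be rotated back to the positive real axis by Cauchy's theorem, since the integrand is entire in $u$ and has Gaussian decay in the sector swept out. As an alternative route that avoids quoting the integral representation of $D_\nu$, one could expand $e^{-vt}=\sum_{k\ge 0}(-v)^{k}t^{k}/k!$, integrate termwise against $e^{-wt^2}$ using $\int_{0}^{\infty}t^{s+k-1}e^{-wt^2}\,dt=\tfrac{1}{2}w^{-(s+k)/2}\Gamma((s+k)/2)$, split the resulting sum into even and odd $k$, and identify the two resulting series with the ${}_1F_1$ decomposition of $D_{-s}$ displayed in the introduction; this recovers the same formula by a direct series computation.
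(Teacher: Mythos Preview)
Your derivation is correct. The substitution $u=t\sqrt{2w}$ applied to the standard integral representation
\[
D_{-s}(x)=\frac{e^{-x^2/4}}{\Gamma(s)}\int_0^\infty u^{s-1}e^{-u^2/2-xu}\,du,\qquad \Re(s)>0,
\]
together with $x=v/\sqrt{2w}$, yields exactly the stated formula, and your remarks on the branch choice and the contour rotation for complex $w$ with $\Re(w)>0$ are the right justification for extending beyond $w>0$.

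As for comparison with the paper: the paper does not actually prove this lemma at all. It simply records the formula as a quotation from Gradshteyn--Ryzhik [7, p.~365, eq.~(3.462), \#1] and moves on. So your proposal supplies a genuine self-contained argument where the paper relies on a table citation. Your alternative route via the ${}_1F_1$ decomposition is also sound and has the minor advantage of linking directly to the representation of $D_s$ already displayed in the introduction, at the cost of a slightly longer computation.
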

By the Legendre duplication formula $\Gamma(\frac{s}{2})\Gamma(\frac{s}{2}+\frac{1}{2})=2^{1-s}\sqrt{\pi}\Gamma(s),$ and the value $D_{-s}(0)\Gamma(\frac{1+s}{2})=2^{s/2}\sqrt{\pi},$ it is readily observed that the $v\rightarrow0$ case of this lemma reduces to the integral formula for $w^{-s/2}\Gamma(\frac{s}{2}).$ The parabolic cylinder function is an analytic function in $v$ and $x$ that enjoys the property that it has no singularities.

\begin{proof}[Proof of Theorem~\ref{thm:theorem1}] 
Inserting (2.8)--(2.9) into Lemma 2.1 gives us the following Bailey pair

\begin{equation}\bar{\beta}_n(q,q):=\sum_{n\ge r_1\ge r_2\dots\ge r_k\ge0}\frac{a^{r_1+r_2+\dots+r_k}q^{r_1^2+r_2^2+\dots+r_k^2}}{(q)_{n-r_1}(q)_{r_1-r_2}\dots(q)_{r_{k-1}-r_{k}}}\frac{(z)_{r_k+1}(q/z)_{r_k}}{(q)_{2r_k+1}}\end{equation}
\begin{equation}\bar{\alpha}_n(q,q)=(-z)^{-n}\frac{q^{(2k+1)n(n+1)/2}(1-z^{2n+1})}{1-q},\end{equation}
A limiting case of Bailey's lemma [1, pg.270, eq.(2.4)] (with $a=q,$ $\rho_1=q,$ $\rho_2\rightarrow\infty,$ and $N\rightarrow\infty$) says that
\begin{equation} \sum_{n\ge0}(q)_n(-1)^nq^{n(n+1)/2}\beta_n(q,q)=(1-q)\sum_{n\ge0}(-1)^n q^{n(n+1)/2}\alpha_n(q,q).\end{equation}
Inserting (2.11)--(2.12) into (2.13) gives
\begin{equation} \begin{aligned}&\sum_{n\ge0}(q)_n(-1)^nq^{n(n+1)/2}\sum_{n\ge r_1\ge r_2\dots\ge r_k\ge0}\frac{a^{r_1+r_2+\dots+r_k}q^{r_1^2+r_2^2+\dots+r_k^2}}{(q)_{n-r_1}(q)_{r_1-r_2}\dots(q)_{r_{k-1}-r_{k}}}\frac{(z)_{r_k+1}(q/z)_{r_k}}{(q)_{2r_k+1}}\\
&=\sum_{n\ge0}(-1)^n(-z)^{-n}q^{(k+1)n(n+1)}(1-z^{2n+1}).\end{aligned} \end{equation}

Putting $q=e^{-wt^2},$ and $z=e^{-vt-wt^2(k+1)},$ we have that (2.14) becomes
\begin{equation} \begin{aligned} &\sum_{n\ge0}(e^{-wt^2};e^{-wt^2})_ne^{-wt^2n(n+1)/2}(-1)^nA_{n,k}(e^{-vt-wt^2(k+1)},e^{-wt^2})\\
&=\sum_{n\ge0}e^{nvt-wt^2(k+1)n^2}(1-e^{-vt(2n+1)-wt^2(k+1)(2n+1)})\\
&= \sum_{n\ge0}e^{nvt-wt^2(k+1)n^2}-\sum_{n\ge0}e^{nvt-wt^2(k+1)n^2-vt(2n+1)-wt^2(k+1)(2n+1)}\\
&= \sum_{n\ge0}e^{nvt-wt^2(k+1)n^2}-\sum_{n\ge1}e^{-wt^2(k+1)n^2-vtn} .\end{aligned}\end{equation}

Subtracting a $1$ from (2.15) and then taking the Mellin transform, we compute that 
$$\begin{aligned} &\mathfrak{M}\bigg\{ \sum_{n\ge0}(e^{-wt^2};e^{-wt^2})_ne^{-wt^2n(n+1)/2}(-1)^nA_{n,k}(e^{-vt-wt^2(k+1)},e^{-wt^2})-1\bigg\}\\
&= \int_{0}^{\infty}t^{s-1}\left( \sum_{n\ge1}e^{nvt-wt^2(k+1)n^2}-\sum_{n\ge1}e^{-wt^2(k+1)n^2-vtn}\right)dt\\
&= \sum_{n\ge1}\int_{0}^{\infty}t^{s-1} \left(e^{nvt-wt^2(k+1)n^2} - e^{-wt^2(k+1)n^2-vtn}\right)dt\\
&=\sum_{n\ge1}\Bigg((2w(k+1)n^2)^{-s/2}\Gamma(s)e^{v^2/(8(k+1)w)}D_{-s}\left(-\frac{v}{\sqrt{2(k+1)w}}\right)\\
&- (2w(k+1)n^2)^{-s/2}\Gamma(s)e^{v^2/(8(k+1)w)}D_{-s}\left(\frac{v}{\sqrt{2(k+1)w}}\right)\Bigg)\\
&=(2w(k+1))^{-s/2}\zeta(s)\Gamma(s)e^{v^2/(8(k+1)w)}D_{-s}\left(-\frac{v}{\sqrt{2(k+1)w}}\right)\\
&- (2w(k+1))^{-s/2}\zeta(s)\Gamma(s)e^{v^2/(8(k+1)w)}D_{-s}\left(\frac{v}{\sqrt{2(k+1)w}}\right). \end{aligned}$$

Here we employed Lemma 2.4 with $v$ replaced by $nv,$ and $w$ replaced by $w(k+1)n^2,$ and the resulting formula is analytic for $\Re(s)>1.$ Now applying Mellin inversion, we compute that for $\Re(s)=c>1,$
\begin{equation}\begin{aligned} &-1+\sum_{n\ge0}(e^{-wt^2};e^{-wt^2})_n(-1)^ne^{-wt^2n(n+1)/2}A_{n,k}(e^{-vt-wt^2(k+1)},e^{-wt^2})\\
&=\frac{1}{2\pi i}\int_{(c)}\bigg((2w(k+1))^{-s/2}\zeta(s)\Gamma(s)e^{v^2/(8(k+1)w)}D_{-s}(-\frac{v}{\sqrt{2(k+1)w}})\\
&-(2w(k+1))^{-s/2}\zeta(s)\Gamma(s)e^{v^2/(8(k+1)w)}D_{-s}(\frac{v}{\sqrt{2(k+1)w}})\bigg)t^{-s}ds.\end{aligned}\end{equation}
The modulus of the integrand can be seen to be estimated as follows (see [11, pg.398] for a similar example). Making the change of variable $s\rightarrow s+\frac{1}{2},$ we obtain an integral for $\Re(s)>\frac{1}{2}.$ For $\Re(s)=\sigma>\frac{1}{2},$ we have $\zeta(s+\frac{1}{2})\ll \zeta(\sigma+\frac{1}{2}).$ Now an estimate of Paris [10, pg. 425, A(10)] for the parabolic cylinder function $D_{-s-\frac{1}{2}}(x)$ for fixed $x$ as $|s|\rightarrow\infty,$ says that
\begin{equation}D_{-s-\frac{1}{2}}(x)=\frac{\sqrt{\pi}e^{-x\sqrt{s}}}{2^{s/2+1/4}\Gamma(\frac{s}{2}+\frac{3}{4})}\left(1-\frac{x^3}{24\sqrt{s}}+\frac{x^2}{24s}(\frac{x^2}{48}-\frac{3}{2})+O(s^{-3/2})\right),\end{equation}
uniformly for $|\arg(s)|\le \pi-\delta<\pi.$ The growth of the integrand is then seen to be dominated by $\Gamma(s+\frac{1}{2})D_{-s-\frac{1}{2}}(\frac{v}{\sqrt{2(k+1)w}}),$ due to Stirling's formula and (2.17), and consequently decays exponentially. By the asymptotic estimate (2.17) in conjunction with [11, pg.39, Lemma 2.2], we see the growth of the integrand is well controlled. Hence, noting that the integrals along the horizontal segments of a rectangular contour tend to $0,$ we may apply Cauchy's residue theorem moving the line of integration to the left.

\par The integrand of (2.16) has simple poles at $s=1$ and the negative integers $s=-n$ due to $\Gamma(s).$ Using $\lim_{s\rightarrow1}(s-1)\zeta(s)=1,$ and [7, pg.1030, eq.(9.254),\#1] $D_{-1}(x)=\sqrt{\pi/2}e^{x^2/4}(1-\erf(\frac{x}{\sqrt{2}})),$
$$\begin{aligned} &\lim_{s\rightarrow1}(s-1)t^{-s}\bigg((2w(k+1))^{-s/2}\zeta(s)\Gamma(s)e^{v^2/(8(k+1)w)}D_{-s}(-\frac{v}{\sqrt{2(k+1)w}})\\
&-(2w(k+1))^{-s/2}\zeta(s)\Gamma(s)e^{v^2/(8(k+1)w)}D_{-s}(\frac{v}{\sqrt{2(k+1)w}})\bigg)\\
&=t^{-1}(2w(k+1))^{-1/2}e^{v^2/(8(k+1)w)}\bigg(D_{-1}(-\frac{v}{\sqrt{2(k+1)w}})-D_{-1}(\frac{v}{\sqrt{2(k+1)w}})\bigg)\\
&=t^{-1}\sqrt{\frac{\pi}{4w(k+1)}}e^{v^2/(4(k+1)w)}\bigg(\left(1-\erf\left(-\frac{v}{\sqrt{4(k+1)w}})\right)\right) \\
&-\left(1-\erf\left(\frac{v}{\sqrt{4(k+1)w}})\right)\right)\bigg).\end{aligned}$$

Displacing the contour to the left, we compute the residues at the negative integers, and obtain
$$\begin{aligned}&-1+\sum_{n\ge0}(e^{-wt^2};e^{-wt^2})_n(-1)^ne^{-wt^2n(n+1)/2}A_{n,k}(e^{-vt-wt^2(k+1)},e^{-wt^2})\\
&=t^{-1}\sqrt{\frac{\pi}{4w(k+1)}}e^{v^2/(4(k+1)w)}\bigg(\erf\left(\frac{v}{2\sqrt{(k+1)w}}\right)-\erf\left(-\frac{v}{2\sqrt{(k+1)w}}\right)\bigg)\\
&+\sum_{N\ge n\ge0}\frac{(2w(k+1))^{n/2}}{n!}(-t)^n\zeta(-n)e^{v^2/(8(k+1)w)}D_{n}\left(-\frac{v}{\sqrt{2(k+1)w}}\right)\\
&- \sum_{N\ge n\ge0}\frac{(2w(k+1))^{n/2}}{n!}(-t)^n\zeta(-n)e^{v^2/(8(k+1)w)}D_{n}\left(\frac{v}{\sqrt{2(k+1)w}}\right)\\
&+R_N.\end{aligned}$$
We estimate the remainder $R_N$ by considering the the underlying integral
$$\bar{R}_N:=\frac{1}{2\pi i}\int_{-2N-i\infty}^{-2N+i\infty}t^{-s}\zeta(s)\Gamma(s)D_{-s}(z)ds.$$ Putting $s=-2N+iu$ and applying [15, pg.16, eq.(2.1.8)] $\Gamma(s)\zeta(s)=\zeta(1-s)2^{s-1}\pi^{s}\sec(\frac{s\pi}{2})$ we have that for $\arg(t)<\frac{\pi}{2},$
$$\begin{aligned}|\bar{R}_N|&\le \frac{|t|^{2N}}{2(2\pi)^{2N}}\int_{-\infty}^{\infty}\frac{|\zeta(2N+1+iu)|}{|\cos\left( (iu-2N)\frac{\pi}{2}\right)|}|D_{2N-iu}(z)|e^{\arg(t)u}du\\
& \le \frac{|t|^{2N}\zeta(2N+1)}{2(2\pi)^{2N}}\int_{-\infty}^{\infty}\frac{|D_{2N-iu}(z)|}{|\cos\left( (iu-2N)\frac{\pi}{2}\right)|}e^{\arg(t)u}du.\end{aligned}$$
To estimate $D_{2N-iu}(z),$ we apply an integral obtained from [7, pg.1028, eq.(9.241)\#2] with the linear relation (corrected) [7, pg.1030, eq.(9.248)\#1] $D_s(z)=\Gamma(s+1)(e^{is\pi/2}D_{-s-1}(iz)+e^{-is\pi/2}D_{-s-1}(-iz))/\sqrt{2\pi},$ which is
$$D_s(z)=\sqrt{\frac{2}{\pi}}e^{z^2/4}\int_{0}^{\infty}\cos(zy-\frac{\pi}{2}s)y^s e^{-y^2/2}dy.$$
Putting $s=2N+iu,$ with $|\cos(zy-\frac{\pi}{2}(2N+iu))|\le \cosh(\frac{\pi}{2}u)$ if $z$ is real, we have
$$\begin{aligned}  |D_{2N-iu}(z)|&\le \sqrt{\frac{2}{\pi}}e^{z^2/4}\int_{0}^{\infty}|\cos(zy-\frac{\pi}{2}(2N+iu))||y^{2N+iu}|e^{-y^2/2}dy\\
& \le \sqrt{\frac{2}{\pi}}e^{z^2/4}\cosh(\frac{\pi}{2}u)\int_{0}^{\infty}y^{2N}e^{-y^2/2}dy\\
&= \sqrt{\frac{1}{\pi}}e^{z^2/4}2^{N}\cosh(\frac{\pi}{2}u)\Gamma(2N+\frac{1}{2}).\end{aligned}$$

Consequently, collecting these estimates we see that $R_N\rightarrow\infty$ as $N\rightarrow\infty$ which implies the expansion is asymptotic. Therefore, we have as $t\rightarrow0^{+},$
$$-1+\sum_{n\ge0}(e^{-wt^2};e^{-wt^2})_n(-1)^ne^{-wt^2n(n+1)/2}A_{n,k}(e^{-vt-wt^2(k+1)},e^{-wt^2})$$
$$\sim t^{-1}\sqrt{\frac{\pi}{4w(k+1)}}e^{v^2/(4(k+1)w)}\bigg(\erf\left(\frac{v}{2\sqrt{(k+1)w}}\right)-\erf\left(-\frac{v}{2\sqrt{(k+1)w}}\right)\bigg)$$
$$+\sum_{n\ge0}\frac{(2w(k+1))^{n/2}}{n!}(-t)^n\zeta(-n)e^{v^2/(8(k+1)w)}D_{n}\left(-\frac{v}{\sqrt{2(k+1)w}}\right)$$
$$- \sum_{n\ge0}\frac{(2w(k+1))^{n/2}}{n!}(-t)^n\zeta(-n)e^{v^2/(8(k+1)w)}D_{n}\left(\frac{v}{\sqrt{2(k+1)w}}\right).$$
The resulting single sum in the theorem arises from the parity relationship $D_n(-x)=(-1)^nD_n(x),$ and cancellation of terms.
\end{proof}

\begin{proof}[Proof of Theorem~\ref{thm:theorem2}] 
Inserting (2.8)--(2.9) (with $q$ replaced by $q^{2^k}$) into Lemma 2.2 gives us the following Bailey pair
\begin{equation} \hat{\beta}_n(q,q)=\sum_{n\ge r_1\ge r_2\dots\ge r_k\ge0}P_{n,r_1,r_2,\dots,r_k}(q)q^{n-r_1+2(r_1-r_2)+\dots+2^{k-1}(r_{k-1}-r_k)},\end{equation}
where
$$P_{n,r_1,r_2,\dots,r_k}(q):=\frac{(-q^2;q)_{2r_1}(-q^{2^2};q^{2})_{2r_2}\dots(-q^{2^{k}};q^{2^{k-1}})_{2r_k}}{(q^2;q^2)_{n-r_1}(q^{2^2};q^{2^2})_{r_1-r_2}\dots(q^{2^{k}};q^{2^{k}})_{r_{k-1}-r_{k}}}\frac{(z;q^{2^k})_{r_k+1}(q^{2^k}/z;q^{2^k})_{r_k}}{(q^{2^k+1};q^{2^k})_{2r_k}},$$

\begin{equation}\hat{\alpha}_n(q,q)=(-z)^{-n}q^{2^{k}n(n+1)/2}(1-z^{2n+1}),\end{equation}
Inserting (2.18)--(2.19) into (2.13) gives
\begin{equation}  \sum_{n\ge0}(q)_n(-1)^nq^{n(n+1)/2}B_{n,k}(z,q)=\sum_{n\ge0}(-1)^n(-z)^{-n}q^{(2^{k}+1)n(n+1)/2}(1-z^{2n+1}).\end{equation}
Putting $q=e^{-wt^2},$ and $z=e^{-vt-wt^2(2^{k}+1)/2},$ we have that (2.20) becomes
\begin{equation} \begin{aligned} &\sum_{n\ge0}(e^{-wt^2};e^{-wt^2})_n(-1)^ne^{-wt^2n(n+1)/2}B_{n,k}(e^{-vt-wt^2(2^{k}+1)/2},e^{-wt^2})\\
&=\sum_{n\ge0}e^{nvt-wt^2(2^{k}+1)n^2/2}(1-e^{-vt(2n+1)-wt^2(2^{k}+1)(2n+1)/2})\\
&= \sum_{n\ge0}e^{nvt-wt^2(2^{k}+1)n^2/2}-\sum_{n\ge0}e^{-wt^2(2^{k}+1)(n+1)^2/2-vt(n+1)}\\
&= \sum_{n\ge0}e^{nvt-wt^2(2^{k}+1)n^2/2}-\sum_{n\ge1}e^{-wt^2(2^{k}+1)n^2/2-vtn} .\end{aligned}\end{equation}

Subtracting a $1$ from (2.21) and then taking the Mellin transform, we compute that 
$$\begin{aligned} &\mathfrak{M}\bigg\{ \sum_{n\ge0}(e^{-wt^2};e^{-wt^2})_ne^{-wt^2n(n+1)/2}B_{n,k}(e^{-vt-wt^2(2^{k}+1)},e^{-wt^2})-1\bigg\}\\
&= \int_{0}^{\infty}t^{s-1}\left( \sum_{n\ge1}e^{nvt-wt^2(2^{k}+1)n^2/2}-\sum_{n\ge1}e^{-wt^2(2^{k}+1)n^2/2-vtn}\right)dt\\
&= \sum_{n\ge1}\int_{0}^{\infty}t^{s-1} \left(e^{nvt-wt^2(2^{k}+1)n^2/2} - e^{-wt^2(2^{k}+1)n^2/2-vtn}\right)dt\\
&=\sum_{n\ge1}\Bigg((w(2^{k}+1)n^2)^{-s/2}\Gamma(s)e^{v^2/(4(2^{k}+1)w)}D_{-s}\left(-\frac{v}{\sqrt{(2^{k}+1)w}}\right)\\
&- (w(2^{k}+1)n^2)^{-s/2}\Gamma(s)e^{v^2/(4(2^{k}+1)w)}D_{-s}\left(\frac{v}{\sqrt{2(2^{k}+1)w}}\right)\Bigg)\\
&=(w(2^{k}+1))^{-s/2}\zeta(s)\Gamma(s)e^{v^2/(4(2^{k}+1)w)}D_{-s}\left(-\frac{v}{\sqrt{(2^{k}+1)w}}\right)\\
&- (w(2^{k}+1))^{-s/2}\zeta(s)\Gamma(s)e^{v^2/(4(2^{k}+1)w)}D_{-s}\left(\frac{v}{\sqrt{2(2^{k}+1)w}}\right). \end{aligned}$$

Here we employed Lemma 2.4 with $v$ replaced by $nv,$ and $w$ replaced by $w(2^{k}+1)n^2/2,$ and again the resulting formula is analytic for $\Re(s)>1.$ By Mellin inversion, we compute for $\Re(s)=c>1,$
$$\begin{aligned} &-1+\sum_{n\ge0}(e^{-wt^2};e^{-wt^2})_n(-1)^ne^{-wt^2n(n+1)/2}B_{n,k}(e^{-vt-wt^2(2^{k}+1)},e^{-wt^2})\\
&=\frac{1}{2\pi i}\int_{(c)}\bigg((w(2^{k}+1))^{-s/2}\zeta(s)\Gamma(s)e^{v^2/(4(2^{k}+1)w)}D_{-s}\left(-\frac{v}{\sqrt{(2^{k}+1)w}}\right)\\
&- (w(2^{k}+1))^{-s/2}\zeta(s)\Gamma(s)e^{v^2/(4(2^{k}+1)w)}D_{-s}\left(\frac{v}{\sqrt{(2^{k}+1)w}}\right)\bigg)t^{-s}ds.\end{aligned}$$
The integrand has a simple pole at $s=1$ and the negative integers $s=-n$ due to $\Gamma(s).$ We compute the residues at the negative integers to see that as $t\rightarrow0^{+},$
$$-1+\sum_{n\ge0}(e^{-wt^2};e^{-wt^2})_n(-1)^ne^{-wt^2n(n+1)/2}B_{n,k}(e^{-vt-wt^2(2^{k}+1)},e^{-wt^2})$$
$$\sim t^{-1}\sqrt{\frac{\pi}{4w(2^k+1)}}e^{v^2/(4(2^k+1)w)}\bigg(\erf\left(\frac{v}{2\sqrt{(2^k+1)w}}\right)-\erf\left(-\frac{v}{2\sqrt{(2^k+1)w}}\right)\bigg)$$
$$+\sum_{n\ge0}\frac{(w(2^{k}+1))^{n/2}}{n!}(-t)^n\zeta(-n)e^{v^2/(4(2^{k}+1)w)}D_{n}\left(-\frac{v}{\sqrt{(2^{k}+1)w}}\right)$$
$$- \sum_{n\ge0}\frac{(w(2^{k}+1))^{n/2}}{n!}(-t)^n\zeta(-n)e^{v^2/(4(2^{k}+1)w)}D_{n}\left(\frac{v}{\sqrt{(2^{k}+1)w}}\right).$$
The resulting single sum in the theorem again arises from the parity relationship $D_n(-x)=(-1)^nD_n(x)$ and cancellation of terms.
\end{proof}

\begin{proof}[Proof of Theorem~\ref{thm:theorem3}] 
Since the proof is identical to the proof of our previous two theorems, we only outline some of the major details. Inserting the Bailey pair (2.8)--(2.9) into Lemma 2.3 and using (2.13) we obtain the identity
\begin{equation} \sum_{n\ge0}\frac{(q)_n}{(-q)_{n}}(-1)^nq^{n(n+1)/2}C_{n,k}(z,q)\end{equation}
$$=\sum_{n\ge0}z^{-n}q^{(k+2)n(n+1)/2}(1-z^{2n+1}),$$
where
$$C_{n.k}(z,q)=\sum_{n\ge r_1\ge r_2\dots\ge r_k\ge0}\frac{q^{r_1^2/2+r_1/2+r_2^2/2+r_2/2+\dots+r_k^2/2+r_k/2}}{(q)_{n-r_1}(q)_{r_1-r_2}\dots(q)_{r_{k-1}-r_{k}}}\frac{(z)_{r_k+1}(q/z)_{r_k}}{(q)_{{r_k}}(q;q^2)_{r_k+1}}.$$
Putting $q=e^{-wt^2},$ and $z=e^{-vt-wt^2(k+2)/2},$ and proceeding as before the result follows. \end{proof}

\begin{proof}[Proof of Theorem~\ref{thm:theorem4}] 
By Lemma 2.4, we have for $\Re(s)=c'>1,$ $\Re(w)>0,$
$$\sum_{n\ge1}\chi(n)e^{-w nt^2-v nt}=\frac{1}{2\pi i}\int_{(c')}(2w)^{-s/2}\Gamma(s)L(s,\chi)e^{v^2/(8w)}D_{-s}(\frac{v}{\sqrt{2w}})t^{-s}ds.$$
Since $L(s,\chi)$ is assumed to be nonprincipal, there is no pole at $s=1.$ Hence, computing the residues at the poles $s=-n$ from the gamma function,
$$\sum_{n\ge1}\chi(n)e^{-w n^2t^2-v nt}\sim e^{v^2/(8w)}\sum_{n\ge0}\frac{(2w)^{n/2}(-t)^n}{n!}L(-n,\chi)D_{n}(\frac{v}{\sqrt{2w}}),$$
as $t\rightarrow0^{+}.$
After noting that if $\chi$ is even, then $L(-2n,\chi)=0,$ and if $\chi$ is odd, then $L(-2n-1,\chi)=0$ the result follows. 

\end{proof}
\section{Concluding Remarks}

Here we have of course limited ourselves with our choices of changing base of $q$ to three examples from [5]. Therefore, many more examples may be obtained by appealing to different Bailey chains from [5]. It would be desirable to obtain expansions for other $L$-functions, such as those contained in [3].

1390 Bumps River Rd. \\*
Centerville, MA
02632 \\*
USA \\*
\\*
ul. A. E. Ody\'{n}ca 47 \\*
02-606 Warsaw\\*
Poland\\*
E-mail: alexpatk@hotmail.com, alexepatkowski@gmail.com


\begin{thebibliography}{9}


\bibitem{ConcreteMath}
G. E. Andrews, \emph{Multiple series Rogers--Ramanujan type identities,} Pacific J. Math. 114 (1984), 267--283.
\bibitem{ConcreteMath}
G. E. Andrews, \emph{Bailey chains and generalized Lambert series: I. Four identities of Ramanujan,} Illinois J. Math. 36 (1992), 251--274.

\bibitem{ConcreteMath}
G. E. Andrews, K. Ono and J. Urroz, \emph{q-identities and values of certain L-functions,} Duke Math. J. 108 (2001), 395--419.

\bibitem{ConcreteMath}
 W. N. Bailey, \emph{Identities of the Rogers--Ramanujan type}, Proc. London Math. Soc. (2), 50 (1948), 1--10.

\bibitem{ConcreteMath}
D.M. Bressoud, M. Ismail, D. Stanton, \emph{Change of base in Bailey pairs,} Ramanujan J. 4 (2000), no. 4, 435--453.


\bibitem{ConcreteMath}
 G. Gasper,  M. Rahman, \emph{Basic hypergeometric series}, Cambridge Univ. Press, Cambridge, 1990.

 \bibitem{ConcreteMath}  I. S. Gradshteyn and I. M. Ryzhik. \emph{Table of Integrals, Series, and Products.} Edited by A.Jeffrey and D. Zwillinger. Academic Press, New York, 7th edition, 2007.
 
 \bibitem{ConcreteMath} K. Hikami, \emph{$q$-series and $L$-functions related to half-derivatives of the Andrews-Gordon identity,} Ramanujan J. 11 (2006), 175--197.

 \bibitem{ConcreteMath}
 J. Lovejoy and K. Ono, \emph{Hypergeometric generating functions for the values of Dirichlet and other L-functions,} Proc. Natl. Acad. Sci., USA 100 (2003), 6904--6909.

\bibitem{ConcreteMath} R.B. Paris, \emph{The asymptotic behaviour of Pearcey's integral for complex variables,} Proc. Roy. Soc. London (1991), pg.391--426.

\bibitem{ConcreteMath} R. B. Paris, D. Kaminski, Asymptotics and Mellin--Barnes Integrals. Cambridge University Press. (2001)

 \bibitem{ConcreteMath}
A. E. Patkowski, \emph{More generating functions for values of certain $L$-functions,} J. Comb. Number Theory 2 (2010), 160--170.
\bibitem{ConcreteMath}
A. E. Patkowski, \emph{On curious generating functions for values of $L$-functions} Int. J. Number Theory 6 1531--1540 (2010).
\bibitem{ConcreteMath}
 A. E. Patkowski, \emph{On the $q$-Pell sequences and sums of tails,} Czechoslovak Mathematical Journal, Vol. 67, No. 1, pp. 279--288, 2017
\bibitem{ConcereteMath} E. C. Titchmarsh, \emph{The theory of the Riemann zeta function,} Oxford University Press,
2nd edition, 1986.

\end{thebibliography}
\end{document}